\newtheorem{thm}{{{Theorem}}}[section]
\newtheorem{lem}[thm]{{Lemma}}
\newtheorem{rem}[thm]{{Remark}}
\numberwithin{equation}{section}
\def\R{\mathbb{R}}
\def\C{\mathbb{C}}
\def\A{\mathbb{A}}
\def\GL{{\mathrm{GL}}}
\def\PGL{{\mathrm{PGL}}}
\def\SL{{\mathrm{SL}}}
\def\SO{{\mathrm{SO}}}
\def\Sp{{\mathrm{Sp}}}
\def\GSp{{\mathrm{GSp}}}
\def\bG{{\mathbb{G}}}
\def\Ad{{\mathrm{Ad}}} 
\def\vol{{\mathrm{vol}}}
\def\d{{\mathrm{d}}}
\def\bsl{\backslash}
\def\inf{\infty}
\def\bK{{\mathbf{K}}}
\def\fa{{\mathfrak{a}}}
\def\fg{{\mathfrak{g}}}
\def\fn{{\mathfrak{n}}}
\def\Ad{{\mathrm{Ad}}}
\def\La{{\langle}}
\def\Ra{{\rangle}}
\def\tJ{{\tilde{J}}}
\def\tZ{{\tilde{Z}}}
\begin{document}

\pagestyle{plain}

\title{The subregular unipotent contribution to the geometric side of the Arthur trace formula for the split exceptional group $G_2$}

\author{Tobias Finis}
\address{Mathematisches Institut, Universit\"at Leipzig, PF 100920 D-04009 Leipzig, Germany}
\email{Tobias.Finis@math.uni-leipzig.de}

\author{Werner Hoffmann}
\address{Fakult\"at f\"ur Mathematik, Universit\"at Bielefeld, PF 100131, D-33501 Bielefeld, Germany}
\email{hoffmann@math.uni-bielefeld.de}

\author{Satoshi Wakatsuki}
\address{Faculty of Mathematics and Physics, Institute of Science and Engineering, Kanazawa University, Kakumamachi, Kanazawa, Ishikawa, 920-1192, Japan}
\email{wakatsuk@staff.kanazawa-u.ac.jp}

\thanks{The second author is partially supported by the Collaborative Research Center 701 of the DFG. The third author is partially supported by JSPS Grant-in-Aid for Scientific Research (No. 26800006, 25247001, 15K04795).}

\maketitle

\begin{abstract}
In this paper, a zeta integral for the space of binary cubic forms is associated with the subregular unipotent contribution to the geometric side of the Arthur trace formula for the split exceptional group $G_2$.
\end{abstract}

\tableofcontents

\section{Introduction}

On the geometric side of the Arthur trace formula, the properties of global coefficients are unknown in general, but they should be explained by zeta functions of prehomogeneous vector spaces.
A crucial step for such an investigation is to relate the contribution of each geometric conjugacy class to a zeta integral of a corresponding prehomogeneous vector space.
In this paper, we perform this task for the split exceptional group of type~$G_2$ over any algebraic number field.
We treat only the subregular unipotent contribution to the trace formula.
Our main result (Theorem \ref{main}) relates it to a zeta integral for the space of binary cubic forms.
The other unipotent contributions behave in a familiar way (see Remark \ref{rem1}).

Shintani was the first to introduce the zeta integral and zeta functions for the space of binary cubic forms (cf. \cite{Shintani}).
He showed their meromorphic continuation by using Eisenstein series (see \cite{Wright} for the adelization).
Actually, our study is closely related to Shintani's work, but his method is not suitable for the modified kernels of the trace formula.
We rather use Kogiso's method \cite{Kogiso} in the proof, because it is simple and does not require Eisenstein series.
Our argument follows the lines of the general direction \cite{Hoffmann} and includes proofs of some of its conjectures in case of~$G_2$.

For earlier work on unipotent contributions and global coefficients, we refer to \cite{Ch1,Ch2,CL,Matz,Matz2} for $\GL(n)$, to \cite{HW} for $\GSp(2)$ and $\Sp(2)$ (rank two), and to \cite{Hoffmann1} for the rank one case (non-adelic).
Information about properties of global coefficients has several applications.
For example, it can be used to study the asymptotic behaviour of Hecke eigenvalues (see \cite{Matz3,MT,KWY}).

\section{Main result}\label{maintheorem}

In this section, we present our main result.
Let us explain notations.
We write $F$ for an algebraic number field and $\A$ for the adele ring of $F$.
Fix a non-tivial additive character $\psi_F$ of $F\bsl \A$.
The measure on $\A$ with $\vol(F\bsl \A)=1$ is self-dual for~$\psi_F$.
Let $|\;\;|$ denote the idele norm on the idele group $\A^\times$ and set $\A^1=\{a\in\A^\times \mid |a|=1\}$.

A split simple algebraic group $G$ of type~$G_2$ over $F$ is defined as the automorphism group of the split octonion algebra over $F$. It is connected and can be realized as a closed subgroup of the split special orthogonal group $\SO(7)$ over $F$.

For every Levi subgroup $M$ of $G$ over $F$,
we write $X(M)_F$ for the abelian group of $F$-rational characters on $M$.
We set $\fa_M=\mathrm{Hom}(X(M)_F,\R)$ and $\fa_M^*=X(M)_F\otimes \R$.
A mapping $H_M:M(\A)\to\fa_M$ is defined by $\langle H_M(m),\chi\rangle=\log|\chi(m)|$ for $m\in M(\A)$, $\chi\in X(M)_F$.
Let $M(\A)^1$ denote the kernel of $H_M$ and let $A_M$ denote the $F$-split part of the center of $M$.

We choose a minimal parabolic subgroup $P_0$ over $F$ and a Levi component $M_0$ of $P_0$ over $F$.
The unipotent radical of $P_0$ is denoted by $N_0$.
There is a maximal compact subgroup $\bK$ of $G(\A)$ which is admissible relative to $M_0$.

Since the rank of $G$ is two, we have two maximal parabolic subgroups $P_1$, $P_2$ containing~$P_0$.
Let $M_j$ denote the Levi subgroup of $P_j$ that contains $M_0$ and let $N_j$ denote the unipotent radical of $P_j$.
For each $P_j$ $(j=0,1,2)$, we define a mapping $H_{P_j}:G(\A)\to \fa_{M_j}$ by $H_{P_j}(nmk)=H_{M_j}(m)$ for $n\in N_j(\A)$, $m\in M_j(\A)$, and $k\in\bK$.

In the set of positive roots with respect to~$P_0$, we have the subset $\Delta_0=\{\alpha_1,\alpha_2\}$ of simple roots, where $\alpha_1$ and $\alpha_2$ are determined by
\[
\alpha_1|_{A_{M_2}}=1 \quad \text{and} \quad \alpha_2|_{A_{M_1}}=1.
\]
We choose the numbering in such a way that $\alpha_1$ is short and $\alpha_2$ is long.
We also have the set $\Delta_0^\vee=\{\alpha_1^\vee,\alpha_2^\vee\}\subset \fa_{M_0}$ of simple coroots and the set $\widehat\Delta_0=\{\varpi_1,\varpi_2\}$ of simple weights corresponding to~$\Delta_0$, which satisfy $\langle \alpha_j^\vee,\varpi_k\rangle=\delta_{jk}$.
The corresponding sets of simple weights for $P_1$ and~$P_2$ are $\widehat\Delta_1=\{\varpi_1\}$ and $\widehat\Delta_2=\{\varpi_2\}$.
For each $P_j$ $(j=0,1,2)$, we denote by $\widehat\tau_{P_j}$ the characteristic function on $\fa_{M_j}$ of the set
\[
\{H\in\fa_{M_j} \mid \varpi(H)>0 \;\; (\forall \varpi \in \widehat\Delta_j)\}.
\]

Fix a Haar measure $\d g$ on $G(\A)$ and normalize the Haar measures $\d k$ on $\bK$ and on $P_j(\A)\cap\bK$ by $\vol(\bK)=\vol(P_j(\A)\cap\bK)=1$. There is a unique left Haar measure on $P_j(\A)$ such that the isomorphism $P_j(\A)/P_j(\A)\cap\bK\to G(\A)/\bK$ of $P_j(\A)$-spaces preserves the invariant measure. We normalize the measure $\d n$ on $N_j(\A)$ by $\vol(N_j(F)\bsl N_j(\A))=1$, so that a Haar measure $\d m$ on $M_j(\A)$  is now determined. We endow $\fa_{M_j}$ with the measure such that the quotient by the lattice spanned by the (projected) simple coroots has volume~1. This fixes a measure $\d^1 m$ on $M_j(\A)^1$, so that the volumes
\[
\vol_{M_j}=\vol(M_j(F)\bsl M_j(\A)^1)\quad (j=0,1,2)
\]
are now determined.

Let $C$ denote the geometric subregular unipotent conjugacy class of $G$.
It is known that the dimension of $C$ is $10$ (see \cite{CM}). Following \cite[Section 1.3]{Hoffmann}, we write $Q$ for a canonical parabolic subgroup (also called a Jacobson-Morozov parabolic subgroup) of an element of $C(F)$, $U$ for its unipotent radical, and $L$ for its Levi subgroup containing $M_0$.
This means that $Q=P_2$, $U=N_2$, and $L=M_2$.
The object of our investigation is the subregular unipotent contribution $J_C^T(f)$ to the geometric side of the trace formula, which is defined as
\begin{multline}\label{cont}
J_C^T(f)= \int_{G(F)\bsl G(\A)} \Big\{ \sum_{\gamma\in C(F)}f(g^{-1}\gamma g)\\
- \sum_{j=1}^2 \sum_{\delta\in P_j(F)\bsl G(F)}\int_{N_j(\A)} f(g^{-1}\delta^{-1}n \delta g)\, \d n \, \widehat\tau_{P_j}(H_{P_j}(\delta g)-T) \Big\}\d g,
\end{multline}
where $f\in C_c^\inf(G(\A))$ is a test function and $T\in\fa_{M_0}$ is a truncation parameter.
It is known that the integral $J_C^T(f)$ is absolutely convergent by \cite[Theorem 7.1]{FL} (see also \cite{Ch2}).
Using \cite[Theorem 4.2 and Corollary 8.4]{Arthur1}, one can  express it as a linear combination of local unipotent weighted orbital integrals.
The coefficients in the linear combination are called global coefficients.

In the Lie algebra $\fg$ of $G$, a $4$-dimensional $F$-vector space $V$ is defined as the direct sum of root spaces of $\alpha_2$, $\alpha_1+\alpha_2$, $2\alpha_1+\alpha_2$, and $3\alpha_1+\alpha_2$.
We write $x\cdot l$ $(x\in V$, $l\in L)$ for the right action of $L$ on $V$ over $F$, which is defined by $\Ad(l^{-1})$.
Then, the pair $(L,V)$ is a prehomogeneous vector space over $F$ and can be identified with the space of binary cubic forms (see Section \ref{g2} for details).
The set $V$ can be identified with $U/U'$ via the exponential mapping, where $U'=[U,U]$ denotes the derived subgroup of $U$.
Similarly, the sum $V_1$ of the root spaces of $\alpha_1$ and $\alpha_1+\alpha_2$ is identified with $N_1/N_1'$ by the exponential mapping, where $N_j'=[N_j,N_j]$.
We normalize measures on $N_j'(\A)$ by $\vol(N_j'(F)\bsl N_j'(\A))=1$.

For the test function $f$ on $G(\A)$, we set
\[
f_{\bK,U'}(x)=\int_\bK \int_{U'(\A)}f(k^{-1}\exp(x) u'k)\, \d u' \, \d k \quad (x\in V(\A)),
\]
\[
f_{\bK,N_1'}(y)=\int_\bK \int_{N_1'(\A)}f(k^{-1}\exp(y)n'k)\, \d n' \, \d k \quad (y\in V_1(\A)),
\]
\[
f_{\bK,N_j}(1)=\int_\bK \int_{N_j(\A)} f(k^{-1}nk)\, \d n \, \d k \quad \text{$(j=1$, $2)$}.
\]

A zeta integral is defined by
\begin{equation}\label{zeta}
Z(\varphi,s)=\int_{L(F)\bsl L(\A)} e^{-s\langle\varpi_2,H_Q(l)\rangle} \sum_{x\in V^0(F)} \varphi(x\cdot l)\, \d l \quad (s\in\C),
\end{equation}
where $\varphi$ is a Schwartz-Bruhat function on $V(\A)$ and $V^0$ denotes the regular $L$-orbit  in $V$.
It is known that $Z(\varphi,s)$ is absolutely convergent if $\mathrm{Re}(s)>2$ (cf. \cite{Shintani,Wright,Saito2}).
Furthermore, it was proved that $Z(\varphi,s)$ can be meromorphically continued to the whole $s$-plane and has at most simple poles at $s=0$, $1/3$, $5/3$, and $2$ (see \eqref{prin}).
Here is our main result.

\begin{thm}\label{main}
For any $f\in C_c^\inf(G(\A))$ and any $T=T_1\alpha_1^\vee+T_2\alpha_2^\vee$, we have
\begin{align*}
J_C^T(f)=& \lim_{s\to 2+0} \frac{\d}{\d s}(s-2) Z(f_{\bK,U'},s) -\vol_{M_1}\int_{V_1(\A)} f_{\bK,N_1'}(y) \,   \log\|y\| \, \d y \\
&+ \sum_{j=1}^2  T_j \,  \vol_{M_j}f_{\bK,N_j}(1),
\end{align*}
where $\|\,\,\|$ is a suitably normalized $\bK\cap P_1(\A)$-invariant height function on~$V_1(\A)$.
\end{thm}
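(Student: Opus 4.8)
The strategy, following the general framework of \cite{Hoffmann}, is to unfold all three terms of \eqref{cont}, interchange the summations and integrations (legitimate since $J_C^T(f)$ converges absolutely by \cite[Theorem 7.1]{FL}), and then identify the resulting expression; the only non-elementary input will be the meromorphic continuation and pole structure \eqref{prin} of the binary cubic zeta integral, which is provided by Kogiso's method \cite{Kogiso}.

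\emph{Step 1 (unfolding the class sum).} The first task is to rewrite the class sum in a form adapted to $Q=P_2$. Standard properties of the canonical parabolic — it is $G(F)$-conjugate to $Q$ for every $\gamma\in C(F)$ (all parabolics of a fixed type are $G(F)$-conjugate because $G$ is split) and $N_G(Q)=Q$ — give
\[
\sum_{\gamma\in C(F)}f(g^{-1}\gamma g)=\sum_{\delta\in Q(F)\bsl G(F)}\ \sum_{u\in U^{\reg}(F)}f\big((\delta g)^{-1}u(\delta g)\big),
\]
where $U^{\reg}(F)=C(F)\cap U(F)$ is dense open in $U(F)$. Unfolding against $\int_{G(F)\bsl G(\A)}$ and using the Iwasawa decomposition $g=ulk$ brings in the modulus $e^{-2\rho_Q(H_Q(l))}$. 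Since $U'$ is the one-dimensional central commutator subgroup of $U$ and $U^{\reg}$ is the preimage of the open $L$-orbit $V^0$ under $U\to U/U'=V$, the integral over $U(F)\bsl U(\A)$ collapses to $\sum_{x\in V^0(F)}\int_{U'(\A)}f(\cdots)\,\d c$; together with the $\bK$-integral this builds the function $f_{\bK,U'}$, while conjugating $U'$ through $l$ produces a further factor $e^{\langle\varpi_2,H_Q(l)\rangle}$. A short root computation gives $2\rho_Q=3\varpi_2$ and $3\alpha_1+2\alpha_2=\varpi_2$, so the two exponents combine to $e^{-2\langle\varpi_2,H_Q(l)\rangle}$ and the ``raw'' main term becomes \emph{formally} the value of $Z(f_{\bK,U'},s)$ at $s=2$. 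This is divergent: $Z(\varphi,s)$ converges only for $\mathrm{Re}(s)>2$ and has a simple pole at $s=2$, reflecting divergence of the $L$-integral along the split centre $A_L$; it also diverges along the cuspidal direction of $L(F)\bsl L(\A)^1$ (noncompact, since $L$ acts as the space of binary cubics), which inside $G$ points towards the \emph{other} maximal parabolic $P_1$.

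\emph{Step 2 (unfolding the subtracted terms, and recombining).} For $j=1,2$ the $P_j$-term of \eqref{cont} unfolds to $\int_{P_j(F)\bsl G(\A)}\big(\int_{N_j(\A)}f(g^{-1}ng)\,\d n\big)\,\widehat\tau_{P_j}(H_{P_j}(g)-T)\,\d g$; writing $g=n_jm_jk$ and evaluating the constant term along $N_j$ (conjugate $N_j$ away, use unimodularity, average over $\bK$) produces $f_{\bK,N_j}(1)$, which for $j=1$ is further resolved as $\int_{V_1(\A)}f_{\bK,N_1'}(y)\,\d y$ through the layer $N_1'$. Each such term is individually divergent (the $A_{M_j}$-integral of $\widehat\tau_{P_j}(\,\cdot\,-T)$ diverges), which is exactly why each must be paired with a divergence of the raw main term. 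Pairing with the $P_2$-term replaces the $A_L$-divergent $L$-integral by the Arthur-truncated integral cut off at $\langle\varpi_2,H_Q(l)\rangle\le T_2=\varpi_2(T)$; expanding it and matching against the Laurent expansion of $Z(f_{\bK,U'},s)$ at $s=2$ — this is where \eqref{prin} is used — gives its constant term $\lim_{s\to2+0}\frac{\d}{\d s}(s-2)Z(f_{\bK,U'},s)$ together with $T_2\,\vol_{M_2}f_{\bK,N_2}(1)$. The remaining cuspidal divergence is removed by pairing with the $P_1$-term: this cusp is the $\SL_2$-cusp of $L$, detected precisely by $\widehat\tau_{P_1}$, and the comparison of the main term's cuspidal tail with the $N_1$-constant term is, up to finite errors, a ``$\sum$ versus $\int$'' comparison over $V_1$; the tail above a given $y\in V_1(\A)$ has length $\asymp\log\|y\|$ in the truncated direction, which yields $-\vol_{M_1}\int_{V_1(\A)}f_{\bK,N_1'}(y)\log\|y\|\,\d y$ (the residual constants being absorbed into the normalization of $\|\cdot\|$), while the explicit cutoff contributes $T_1\,\vol_{M_1}f_{\bK,N_1}(1)$. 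The volumes $\vol_{M_j}$ enter through the $M_j(F)\bsl M_j(\A)^1$-integrations in the unfolded $P_j$-terms.

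\emph{The main obstacle.} The crux is the \emph{simultaneous} regularization: the raw $L$-integral diverges in two directions at once, Arthur's truncation handles both, and one must control the ``corner'' where both the $A_L$-truncation and the cuspidal truncation are active and show that the combined boundary contributions assemble into exactly the stated $\log$- and linear-in-$T$ terms with the correct constants. Concretely this amounts to running Kogiso's analysis of the binary cubic zeta integral in the presence of the extra Arthur truncation while keeping exact track of all measure normalizations. Verifying that the open $L$-orbit $V^0$ corresponds to the subregular class and that $U^{\reg}(F)$ is the preimage of $V^0(F)$ is a routine but necessary preliminary for Step 1.
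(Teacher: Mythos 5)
Your proposal follows the same route the paper takes: unfold the class sum along $Q=P_2$ to produce (formally) the binary cubic zeta integral at $s=2$, unfold the two subtracted terms, and then regularize by running Kogiso's Poisson-summation analysis with the two Arthur truncations built in, finally reading off the constant term of the Laurent expansion of $Z(f_{\bK,U'},s)$ at $s=2$ via \eqref{prin}. Your exponent bookkeeping ($2\rho_Q=3\varpi_2$, the extra $e^{\langle\varpi_2,H_Q(l)\rangle}$ from conjugating $U'$, landing at $|\det l|^{-2}$) is correct, and your identification of the two divergences — the $A_L$ direction controlled by $\widehat\tau_Q$, the $\SL_2$-cusp of $L$ controlled by $\widehat\tau_{P_1}$ — is exactly the structure the paper exploits.

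The one ingredient you leave implicit that the paper isolates as a lemma is the mean value formula (Siegel--Weil--Ono) for $\SL(2)$ acting on $F\oplus F$. In the paper this is used twice: first (Lemma~\ref{change}) to replace the $P_1$-subtraction $\int_{N_1(\A)}f(g^{-1}ng)\,\d n$ by the sum $\sum_{\nu\in N_1(F)/N_1'(F),\,\nu\notin N_1'(F)}\int_{N_1'(\A)}f(g^{-1}\nu n'g)\,\d n'$ without changing the truncated integral, yielding a modified kernel $\tJ_C^T$ that unfolds cleanly to a modified zeta $\tZ(\varphi,s,T)$; and second, at the very end, to turn $\frac{\vol_{M_0}}{c_F}\int_{\A^\times}(T_1-\log|a|)|a|^2 f_{\bK,N_1'}(0,a)\,\d^\times a$ into $\vol_{M_1}\int_{V_1(\A)}(T_1-\log\|y\|)f_{\bK,N_1'}(y)\,\d y$, which is how $\vol_{M_1}$ and the $\bK\cap P_1(\A)$-invariant height actually enter. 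Your ``$\sum$ versus $\int$ over $V_1$'' remark and your parenthetical about constants being ``absorbed into the normalization of $\|\cdot\|$'' are gesturing at exactly this, but the mean value formula is the precise mechanism, and it is also what makes the absolute-convergence proof (Lemmas~\ref{tJconv}, \ref{l4}, carried out by a chain of Poisson summations with damping terms over carefully chosen regions in $A_0^+$) go through for $\mathrm{Re}(s)>5/3$. So: same approach, with the technical core — $\tZ(\varphi,s,T)$, its convergence, and its explicit comparison against \eqref{prin} — deferred in your sketch, as you yourself flag.
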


The proof will be given in Section~\ref{proof}.
Let us explain the relation between Theorem \ref{main} and global coefficients.
Fix a finite set $S$ of places of $F$ including all infinite places and set $F_S=\prod_{v\in S}F_v$ where $F_v$ denotes the completion of $F$ at $v$.
Assume that $S$ is sufficiently large.
The integral of $f_{\bK,N_1'}$ in the formula is essentially the derivative of the Tate integral at $2$.
Hence, it is expressed by the derivative of the product of a local zeta integral over $F_S$ and the Dedekind zeta function outside~$S$.
Using \cite{DW} or \cite{Saito1}, one can also express $Z(f_{\bK,U'},s)$ as a finite sum of products of local zeta integrals over $F_S$ and zeta functions outside~$S$.
Thus, the global coefficients can be expressed by such zeta functions.
However, in order to determine them precisely, one has to compute Arthur's weight factors as in~\cite[Section 5]{HW}) in order to compare them with the weight factors in derivatives of local zeta integrals over $F_S$.

\begin{rem}\label{rem1}\normalfont
The group $G$ has the five geometric unipotent conjugacy classes (see \cite{CM}).
There are three rigid unipotent orbits of dimensions $0$, $6$, and~$8$.
For each rigid class $O$, the contribution $J_O^T(f)=J_O(f)$ can be easily studied, because it need not be truncated, that is,
\[
\int_{G(F)\bsl G(\A)} \sum_{\gamma\in O(F)} |f(x^{-1}\gamma x)|\, \d x<\inf .
\]
The contribution of the unit element $(\dim=0)$ equals $\vol(G(F)\bsl G(\A))\, f(1)$.
The minimal unipotent contribution $(\dim=6)$ is expressed by a product of $\vol_{M_2}$ and the Tate integral over $F$ at $s=3$.
The contribution of the other rigid class $(\dim=8)$ is expressed as a product of $\vol_{M_1}$ and the Tate integral over $F$ at $s=2$.
In terms of the study \cite{DK} of local stable distributions, we guess that these values correspond to the contributions of unit elements of the endoscopic groups $\PGL(3)$ and $\SL(2)\times \SL(2)/\mu_2$ of $G$ (cf.~\cite{GG}).

Beside the rigid orbits and the subregular orbit $(\dim=10)$ already introduced, the remaining case is the regular unipotent orbit $(\dim=12)$.   
Its contribution to the trace formula is related to the Tate integral for $F\oplus F$ and can be studied by an argument similar to \cite{Ch2}, \cite{Matz} or \cite{HW}.
We omit its discussion since it is lengthy and presents no novelty.
\end{rem}

\begin{rem}\label{rem2}\normalfont
In \cite[Section 3.3]{Hoffmann}, the second author indicated that the $F$-rational points in a geometric conjugacy class should be partitioned into finer classes, called truncation classes, depending on which parabolic subgroups intervene in their truncation.
In the present situation, geometric orbits in $V^0(F)$ are divided into three classes related to field extensions $E$ of~$F$.
In \cite[Section 8]{Taniguchi}, Taniguchi decomposed the zeta integral as $Z(\varphi,s)=\sum_{i=1}^3Z_{(i)}(\varphi,s)$ according to the index $i=[E:F]$.
His result \cite[Proposition 8.6]{Taniguchi} implies that $Z_{(1)}(f_{\bK,U'},2)$ is convergent, while $Z_{(i+1)}(f_{\bK,U'},s)$ should be related to the truncation $\widehat\tau_{P_i}(H_{P_i}(g)-T)$ $(i=1,2)$.
We do not study the contribution of each truncation class in this paper, but it would be interesting to understand this phenomenon.
\end{rem}

\section{The group of type $G_2$ and the space of binary cubic forms}\label{g2}

First, we recall some known facts on the structure of $G$.
For details, we refer to \cite{BS,CNP,GGS,SV}.

The minimal Levi subgroup $M_0$ is a maximal split torus in $G$ over~$F$, and $\Delta_0$  is a basis of the abelian group~$X(M_0)$. The system $\Phi_+\subset X(M_0)$ of positive roots corresponding to $P_0$ is given by
\[
\Phi_+=\{\alpha_1,\;\; \alpha_2, \;\; \alpha_1+\alpha_2, \;\; 2\alpha_1+\alpha_2, \;\; 3\alpha_1+\alpha_2, \;\; 3\alpha_1+2\alpha_2\}.
\]
We set $\Phi_-=-\Phi_+$ and $\Phi=\Phi_+\sqcup\Phi_-$.
We have a Chevalley basis $\{H_{\alpha_1},H_{\alpha_2},X_\alpha\mid \alpha\in\Phi\}$ of $\fg$ (cf. \cite{Steinberg} and \cite[p.293]{CNP}).
Now, the set $\{X_\alpha\mid \alpha\in\Phi_+\setminus \{ \alpha_{3-j} \}\}$ ($j=1$ or $2$) is a basis of the $F$-vector space $\fn_j=\mathrm{Lie}(N_j)$.

In this setting, it follows that
\[
\varpi_1=2\alpha_1+\alpha_2 , \quad \varpi_2=3\alpha_1+2\alpha_2.
\]
We choose a new basis $\{H_1,H_2\}$ in $\fa_0$ by
\[
\alpha_1^\vee=H_2-H_1 ,\quad \alpha_2^\vee=H_1 .
\]
Then, it follows that 
\[
\alpha_1(t_1H_1+t_2H_2)=t_2-t_1,\quad \alpha_2(t_1H_1+t_2H_2)=2t_1-t_2.
\]
for $t_1,t_2\in\R$.
Our basis $\{H_1,H_2\}$ corresponds to the parametrization $M_0\cong \bG_m\times \bG_m$ for which
\[
\alpha_1(a,b)=ba^{-1},\quad \alpha_2(a,b)=a^2b^{-1} \qquad ((a,b)\in M_0),
\]
and we find that $M_1$ and $L=M_2$ are isomorphic to $\GL(2)$ over $F$.
For $m=(a,b)\in M_0(\A)$, one has
\[
H_0(m)=(\log|a|)H_1+(\log|b|)H_2\in\fa_0,
\]
and it follows that
\[
\widehat\tau_{P_1}(H_{P_1}(m)-T)=1 \;\; \Leftrightarrow \;\; |b|>e^{T_1}, \quad  \widehat\tau_{P_2}(H_{P_2}(m)-T)=1 \;\; \Leftrightarrow \;\; |ab|>e^{T_2} .
\]

Next, we relate a subspace of $\fn_2$ to the space of binary cubic forms.
By the following identifications
\[
u^3=X_{\alpha_2}, \;\; 3u^2v=X_{\alpha_1+\alpha_2} , \;\;  3uv^2=X_{2\alpha_1+\alpha_2} , \;\; v^3= X_{3\alpha_1+\alpha_2},
\]
where $u$ and $v$ are variables,  the $F$-vector space
\[
V=\La X_{\alpha_2} , X_{\alpha_1+\alpha_2} , X_{2\alpha_1+\alpha_2} , X_{3\alpha_1+\alpha_2} \Ra (\subset \fg)
\]
is identified with the space of binary cubic forms over $F$.
The group $L=\GL(2)$ acts on $V$ by
\[
f(u,v)\cdot l=\det(l)\, f((u,v)l^{-1}) \qquad (l\in L)
\]
for each binary cubic form $f(u,v)$ in $V$.
This is the restriction of the action $\mathrm{Ad}(l^{-1})$ on $\fg$ to $V$.
We identify $V(F)$ with $F^{\oplus 4}$ by the isomorphism
\[
x_1X_{\alpha_2}+\frac{x_2}{3}X_{\alpha_1+\alpha_2}+\frac{x_3}{3} X_{2\alpha_1+\alpha_2}+x_4X_{3\alpha_1+\alpha_2}\mapsto (x_1,x_2,x_3,x_4).
\]
For $x=(x_1,x_2,x_3,x_4)$ and $y=(y_1,y_2,y_3,y_4)$ in $V$, we set
\[
[x,y]=x_1y_4-\frac{1}{3}x_2y_3+\frac{1}{3}x_3y_2-x_4y_1.
\]
These notations are the same as in \cite{Wright}.
By this bilinear form, the dual space of $V$ is identified with $V$ itself in an $L$-equivariant fashion.
The discriminant $P(x)$ of $x=(x_1,x_2,x_3,x_4)\in V$ is given by
\[
P(x)=x_2^2x_3^2+18x_1x_2x_3x_4-4x_2^3x_4-4x_1x_3^3-27x_1^2x_4^2.
\]
Hence, the regular geometric $L$-orbit in $V$ is given by $V^0=\{x\in V \mid P(x)\neq 0\}$.

We already introduced the zeta integral $Z(\varphi,s)$ in \eqref{zeta}, where $s\in\C$ and $\varphi$ is a Schwartz-Bruhat function on $V(\A)$.
We may assume without loss of generality that $\varphi(x\cdot k)=\varphi(x)$ holds for any $k\in\bK$ and $x\in V(\A)$.
The Haar measure $\d x$ on $V(\A)$ is normalized by $\vol(V(\A)/V(F))=1$. We also choose a Haar measure $\d^\times a$ on $\A^\times$. Together with the measure $\d t/t$ on~$\R_{>0}$, this determines a measure $\d^1a$ on~$\A^1$, and we set
\[
 c_F=\vol(F^\times \bsl \A^1).
\]
Consider the partial zeta integral
\[
Z_+(\varphi,s)=\int_{L(F)\bsl L(\A),\, |\det(l)|<1} |\det(l)|^{-s} \sum_{x\in V^0(F)} \varphi(x\cdot l)\, \d l
\]
and the Fourier transform resp. singular orbital integral
\[
\hat\varphi(y)=\int_{V(\A)} \varphi(x) \, \psi_F([x,y])\, \d x, \qquad \Sigma_1(\varphi,s)=\int_{\A^\times}|a|^s \, \varphi(0,0,0,a) \, \d^\times a.
\]
If $\mathrm{Re}(s)>2$, then one has
\begin{multline}\label{prin}
Z(\varphi,s)=  Z_+(\varphi,s)+Z_+(\hat\varphi,2-s) -\frac{1}{s} \, \vol_L \, \varphi(0) + \frac{1}{s-2} \, \vol_L \, \hat\varphi(0) \\
 - \frac{\vol_{M_0}}{c_F(3s-1)}\, \Sigma_1(\varphi,2/3) + \frac{\vol_{M_0}}{c_F(3s-5)}\, \Sigma_1(\hat\varphi,2/3) \\
 - \frac{\vol_{M_0}}{c_F \, s} \int_{\A^\times}\int_\A \int_\A |a|^2 \hat\varphi(0,a,x_3,x_4) \, \d x_3\, \d x_4 \, \d^\times a \\
 + \frac{\vol_{M_0}}{c_F(s-2)} \int_{\A^\times}\int_\A \int_\A |a|^2 \varphi(0,a,x_3,x_4) \, \d x_3 \, \d x_4 \, \d^\times a  .
\end{multline}
This can be deduced from the results of \cite{Shintani,Wright,Kogiso} by an argument which will reappear in the proof of Lemma~\ref{l4} below.
Since $Z_+(\varphi,s)$ is an entire function of~$s$, this provides the meromorphic continuation of $Z(\varphi,s)$ to the whole $s$-plane.

\section{Proof of the Main result}\label{proof}

In this section, we shall prove Theorem \ref{main}, our main result.
The subregular unipotent contribution $J_C^T(f)$ was already defined in \eqref{cont} for $f\in C_c^\inf(G(\A))$.
We define a modified version as
\begin{multline*}
\tJ_C^T(f)=\int_{G(F)\bsl G(\A)} \Big\{ \sum_{\gamma\in C(F)}  f(g^{-1}\gamma g)   \\
- \sum_{\delta\in Q(F)\bsl G(F)}\int_{U(\A)} f(g^{-1}\delta^{-1}u \delta g)\, \d u \, \widehat\tau_Q(H_Q(\delta g)-T)   \\
- \sum_{\delta\in P_1(F)\bsl G(F)}\sum_{\substack{ \nu\in N_1(F)/N_1'(F) \\ \nu\notin N_1'(F) }} 
\int_{N_1'(\A)} f(g^{-1}\delta^{-1} \nu n' \delta g)\, \d n' \, \widehat\tau_{P_1}(H_{P_1}(\delta g)-T)  \Big\}\d g.
\end{multline*}
\begin{lem}\label{change}
For each $T\in \fa_0^+$, the integral $\tJ_C^T(f)$ is absolutely convergent and we have 
\[
J_C^T(f)=\tJ_C^T(f).
\]
\end{lem}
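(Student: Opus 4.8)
The plan is to compare the truncation terms in $J_C^T(f)$ with those in $\tJ_C^T(f)$ and to show that the difference is a convergent integral that vanishes identically. The starting point is the observation that $Q=P_2$, so the $j=2$ summand in \eqref{cont} already coincides with the second line of $\tJ_C^T(f)$; the only discrepancy lies in the $j=1$ term. For $P_1$, I would split the integral over $N_1(\A)$ according to the decomposition $N_1 = N_1' \cdot (N_1/N_1')$, writing
\[
\int_{N_1(\A)} f(g^{-1}\delta^{-1}n\delta g)\, \d n
= \sum_{\nu\in N_1(F)/N_1'(F)} \int_{N_1'(\A)} f(g^{-1}\delta^{-1}\nu n'\delta g)\, \d n' ,
\]
after first unfolding the sum over $\delta\in P_1(F)\bsl G(F)$ against the $N_1(F)$-periodicity: more precisely, replacing $\delta$ by $\nu\delta$ for $\nu$ ranging over a set of representatives of $N_1(F)/N_1'(F)$ turns the integral over $N_1(\A)$ (modulo $N_1'(F)$, say) into an integral over $N_1'(\A)$ with an extra sum over $\nu$. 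This is legitimate because $\widehat\tau_{P_1}(H_{P_1}(\cdot))$ depends only on the class modulo $N_1(\A)$. Carrying this out shows that $\tJ_C^T(f)$ is obtained from $J_C^T(f)$ precisely by deleting the term $\nu\in N_1'(F)$ from the $\nu$-sum in the $P_1$-contribution.

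Thus $J_C^T(f) - \tJ_C^T(f)$ equals (formally)
\[
\int_{G(F)\bsl G(\A)} \sum_{\delta\in P_1(F)\bsl G(F)} \int_{N_1'(\A)} f(g^{-1}\delta^{-1}n'\delta g)\, \d n'\, \widehat\tau_{P_1}(H_{P_1}(\delta g)-T)\, \d g .
\]
Unfolding the sum over $\delta$ against $P_1(F)\bsl G(F)$ and using $N_1'\subset N_1\subset P_1$, this integral collapses to
\[
\int_{P_1(F)\bsl G(\A)} \int_{N_1'(\A)} f(g^{-1}n'g)\, \d n'\, \widehat\tau_{P_1}(H_{P_1}(g)-T)\, \d g ,
\]
and then, factoring $P_1(F)\bsl G(\A)$ through $N_1'(F)\bsl N_1'(\A)$ (whose volume is $1$), one sees that the $N_1'(F)$-invariance of $f(g^{-1}\,\cdot\,g)$ and the fact that $\widehat\tau_{P_1}(H_{P_1}(g)-T)$ is left $N_1'(\A)$-invariant make the inner $N_1'$-integral a genuine period. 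The key point is that $N_1'$ is one-dimensional, spanned by $X_{3\alpha_1+2\alpha_2}$, i.e. the highest root space; for the element $\gamma=1$ the orbital integral over $N_1'$ of the compactly supported $f$ against the characteristic function $\widehat\tau_{P_1}$ is, after a change of variables in the $N_1'$-direction, the integral of a Schwartz function against a cutoff that is already accounted for in the $j=1$ term's $\nu\notin N_1'$ truncation — so the net contribution is zero. I would make this precise by a direct change-of-variables argument in $N_1'(\A)\cong\A$, exactly paralleling the reduction in \cite[Section 5]{HW} or the corresponding step in \cite{Ch2}.

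The convergence of $\tJ_C^T(f)$ I would deduce from the convergence of $J_C^T(f)$ (known by \cite[Theorem 7.1]{FL}) together with the convergence of the difference integral just analyzed: the difference is handled by the same majorization used for the rigid orbits, since $f$ is compactly supported and the extra truncation function only restricts the domain. The main obstacle I anticipate is the bookkeeping of the unfolding of the double sum $\sum_{\delta}\sum_\nu$ and making sure that the rearrangement of the $\nu$-sum is justified by absolute convergence rather than merely formal — one has to check that after isolating $\nu\in N_1'(F)$, the remaining ($\nu\notin N_1'(F)$) sum still defines an absolutely convergent integral, which follows once one knows the convergence of $J_C^T(f)$ itself. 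Everything else is a matter of matching $Q$ with $P_2$ and organizing the $P_1$ term via the filtration $N_1'\subset N_1$.
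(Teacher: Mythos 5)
Your reduction of the problem to the $j=1$ discrepancy and the observation $Q=P_2$ are correct, but the central manipulation does not hold, and the key tool of the paper is absent from your argument.

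First, the displayed identity
\[
\int_{N_1(\A)} f(g^{-1}\delta^{-1}n\delta g)\, \d n
= \sum_{\nu\in N_1(F)/N_1'(F)} \int_{N_1'(\A)} f(g^{-1}\delta^{-1}\nu n'\delta g)\, \d n'
\]
is simply false: the left-hand side is a continuous integral over $N_1(\A)/N_1'(\A)\cong V_1(\A)$ (times the $N_1'(\A)$-integral), while the right-hand side is a discrete sum over $V_1(F)$. No amount of unfolding the $\delta$-sum against $N_1(F)$ turns the one into the other pointwise; the $\delta$-unfolding only collapses $G(F)\bsl G(\A)$ to $P_1(F)\bsl G(\A)$ and does not create a lattice sum inside $N_1(\A)$. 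The two quantities become equal only \emph{after} integrating over $M_1(F)\bsl M_1(\A)^1$, and that equality is precisely Siegel's mean value formula \eqref{mean} for $\SL(2)$ acting on $F^{\oplus 2}$, which is the essential ingredient in the paper's proof. Your argument never invokes it, so the cancellation you assert in the final step is unjustified (and in fact circular: you appeal to "the $\nu\notin N_1'$ truncation" being "already accounted for," which is exactly what needs to be proved).

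Second, the structural claim underpinning your final step is wrong: $N_1'=[N_1,N_1]$ is \emph{not} one-dimensional. Its Lie algebra is spanned by the root vectors $X_{2\alpha_1+\alpha_2}$, $X_{3\alpha_1+\alpha_2}$, $X_{3\alpha_1+2\alpha_2}$, so $\dim N_1'=3$, while $N_1/N_1'\cong V_1$ is $2$-dimensional. The "direct change of variables in $N_1'(\A)\cong\A$" therefore has no basis. What is actually used in the paper is Poisson summation on $V_1(F)$ (to control the absolute convergence, giving the bound displayed in the paper's proof) together with the mean value formula (to establish the equality). Your sketch of convergence via "the same majorization as for the rigid orbits" also needs the Poisson-summation estimate, since the difference $J_C^T-\tJ_C^T$ is not an orbital integral of a rigid class but a lattice-sum-minus-integral of Schwartz type on $V_1$, whose absolute convergence is not automatic.
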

\begin{proof}
Applying the Poisson summation formula to $V_1(F)= FX_{\alpha_1}+FX_{\alpha_1+\alpha_2}=\Ad(M_1(F))(F X_{\alpha_1+\alpha_2})$, one can prove
\begin{multline*}
\int_\bK \int_{P_1(F) \bsl P_1(\A)}\Big| \sum_{\delta\in P_0(F)\bsl P_1(F)}\sum_{x\in F^\times }  \int_{N_1'(\A)} f(k^{-1}p^{-1}\delta^{-1} \exp(x X_{\alpha_1+\alpha_2}) n' \delta pk)\, \d n' \\ 
-  \int_{N_1(\A)} f(k^{-1}p^{-1}n pk)\, \d n \Big|\, \widehat\tau_{P_1}(H_{P_1}(p)-T) \d p \, \d k
\end{multline*}
is convergent. 
Therefore, one sees that $\tJ_C^T(f)$ is absolutely convergent by considering the difference $\tJ_C^T(f)-J_C^T(f)$.
Furthermore, the equality is derived from the mean value formula
\begin{equation}\label{mean}
\int_{H(F)\bsl H(\A)} \sum_{y\in F\oplus F\setminus \{(0,0)\}} \phi(y\cdot h)\, \d h= \int_{H(F)\bsl H(\A)}\d h \;  \int_{\A\oplus\A} \phi(x) \, \d x,
\end{equation}
where $H=\SL(2)$, $\d h$ is a Haar measure on $H(\A)$, and $\phi$ is a Schwartz-Bruhat function on $\A\oplus \A$.
\end{proof}
In the above proof, we needed only a special case of the mean value formula,which was studied in more general situations by Siegel, Weil and Ono.

The following lemma will be proved in Section~\ref{prooflem1}.
\begin{lem}\label{tJconv}
The integral
\begin{multline*}
\int_{Q(F)\bsl G(\A)}\Big|   \sum_{\mu\in V^0(F)}\sum_{\nu\in U'(F)} f(g^{-1}\exp(\mu)\nu  g)  - \int_{U(\A)} f(g^{-1}u g)\, \d u \, \widehat\tau_Q(H_Q(g)-T)   \\
- \sum_{\delta\in P_0(F)\bsl Q(F)}\sum_{x\in F^\times }  \int_{N_1'(\A)} f(g^{-1}\delta^{-1} \exp(x X_{\alpha_1+\alpha_2}) n' \delta g)\, \d n' \, \widehat\tau_{P_1}(H_{P_1}(\delta g)-T)  \Big| \, \d g
\end{multline*}
is convergent
\end{lem}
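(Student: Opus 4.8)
The plan is to prove convergence by comparing the three terms inside the absolute value against each other in a way that exploits cancellation, following the general strategy of Arthur's mixed truncation estimates. First I would reduce the integral over $Q(F)\bsl G(\A)$ to an integral over $U(F)\bsl G(\A)$ (or a Siegel domain for $L$) by Iwasawa decomposition $g=u\,l\,k$ with $u\in U(\A)$, $l\in L(\A)$, $k\in\bK$, and unfold the inner sums. The key observation is that the function $\widehat\tau_Q(H_Q(g)-T)$ equals $1$ exactly when $\langle\varpi_2,H_Q(l)\rangle$ exceeds the relevant truncation threshold, i.e.\ $|\det l|$ is small; in that region the sum $\sum_{\mu\in V^0(F)}\sum_{\nu\in U'(F)}f(g^{-1}\exp(\mu)\nu g)$ is, after the exponential-map identification $U/U'\cong V$, essentially $\sum_{x\in V^0(F)}f_{?}(x\cdot l)$-type data plus the $U'$-integral, and the subtracted term $\int_{U(\A)}f(g^{-1}ug)\,\d u$ accounts for the full lattice sum minus the point $x=0$ and the singular locus.

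The main body of the argument then splits into two regions. In the region where $\widehat\tau_Q(H_Q(g)-T)=1$, the difference of the first two terms is governed by the Poisson-type / lattice-point discrepancy between $\sum_{x\in V^0(F)}\varphi(x\cdot l)$ and $|\det l|^{-?}\int_{V(\A)}\varphi$, and the standard estimate for such discrepancies (as in the analysis of the Shintani zeta integral, cf.\ \cite{Shintani,Wright,Kogiso}) shows the error is controlled by contributions supported on the reducible locus, namely the lines $FX_{\alpha_1+\alpha_2}$ and their $M_1$-translates, which is precisely what the third term subtracts; after this subtraction one is left with terms that decay exponentially in $\langle\varpi_2,H_Q(l)\rangle$ and hence are integrable. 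In the complementary region where $\widehat\tau_Q(H_Q(g)-T)=0$, only the first and third terms survive, and here I would use the rapid decay of $f$ together with a further Iwasawa decomposition adapted to $P_0\subset Q$: the sum over $\mu\in V^0(F)$ restricted to the non-$\widehat\tau_Q$ region is dominated by the contribution of reducible cubic forms, which matches (up to a convergent remainder) the $P_1$-term weighted by $\widehat\tau_{P_1}(H_{P_1}(\delta g)-T)$, the $\widehat\tau_{P_1}$ factor being exactly the indicator needed to make that sum finite via the mean-value formula \eqref{mean} already invoked in Lemma~\ref{change}.

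Concretely, the key steps in order are: (i) Iwasawa-decompose the integral and rewrite all three terms in terms of $l\in L(\A)$, using the measure normalizations fixed in Section~\ref{maintheorem}; (ii) partition $L(F)\bsl L(\A)^1\times\fa_L$ according to the value of $\widehat\tau_Q$, and on each piece identify which of the three terms are nonzero; (iii) on the $\widehat\tau_Q=1$ piece, apply the lattice-point estimate for $\sum_{x\in V(F)}\varphi(x\cdot l)$, peel off the $x=0$ and reducible-locus contributions, and bound the remainder by an absolutely convergent integral (this uses that for $\mathrm{Re}(s)>2$ the Shintani integral converges, so the relevant exponents are in the convergent range); (iv) on the $\widehat\tau_Q=0$ piece, bound the sum over $V^0(F)$ by its reducible part and compare term-by-term with the third term, again using \eqref{mean}; (v) assemble the two pieces. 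The hard part will be step (iii): making the cancellation between the first term and the $U(\A)$-integral \emph{uniform} in $l$ as $|\det l|\to 0$, since naively each term diverges there and one must show the discrepancy is exactly absorbed by the explicitly subtracted reducible-orbit term plus a genuinely integrable error — this is where the specific combinatorics of the $G_2$ root system (the string $\alpha_2,\alpha_1+\alpha_2,2\alpha_1+\alpha_2,3\alpha_1+\alpha_2$ and the coroot $\varpi_2$) and the precise form of Kogiso's argument enter, and it is the step that genuinely requires the prehomogeneous-vector-space structure rather than soft estimates.
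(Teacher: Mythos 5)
Your high-level strategy matches the paper's: Iwasawa-decompose over $Q(F)\bsl G(\A)$, separate the integrand according to the value of $\widehat\tau_Q$, and apply Poisson summation on $V(F)\cong U(F)/U'(F)$ to compare the unipotent sum against the subtracted $U(\A)$-integral and the reducible-orbit term. But there are two concrete problems, one of them essential.

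First, a direction error that signals some confusion about the geometry. We have $\widehat\tau_Q(H_Q(g)-T)=1$ exactly when $\varpi_2(H_Q(l))=\log|\det l|>T_2$, i.e.\ when $|\det l|$ is \emph{large}; the action $\Ad(l^{-1})$ on $V$ scales volume by $|\det l|^{-2}$, so it is large $|\det l|$ that compresses the lattice $V(F)$ and drives the divergence that the $U(\A)$-integral is subtracted to cancel. You write ``i.e.\ $|\det l|$ is small'' and later ``as $|\det l|\to 0$''; the problematic limit is $|\det l|\to\infty$, and your region labels are reversed.

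Second, and more seriously, your step~(iii) --- which you correctly flag as the crux --- is delegated entirely to ``the precise form of Kogiso's argument,'' but that step \emph{is} the lemma. After Poisson summation over $U'(F)$ (to isolate the $\nu=1$ part), Poisson summation on $V(F)$, and decomposing the singular set into the three $L$-orbits $S_0,S_1,S_2$, one is left with a difference of lattice sums over the reducible loci $S_1,S_2$ against the $\widehat\tau_{P_1}$-truncated $N_1'$-term. This difference is not controlled by any soft ``lattice-point discrepancy'' estimate: each term diverges individually when $|\det l|\to\infty$. Kogiso's mechanism consists in inserting auxiliary \emph{dampening} terms $\pm\hat f^{(4)}((0,0,0,0),g)$, $\pm\check f^{(4)}((0,0,0,0),g)$ (and in a second round $\pm\hat f^{(3,4)}$, $\pm\check f^{(3,4)}$), so that a further partial Poisson summation in $x_4$, respectively $(x_3,x_4)$, can be applied on each piece of a partition of the $A_0^+$-cone cut out by $a^{-3}z\gtrless 1$ and $az\gtrless e^{T_1}$; one also needs the identity $\check f^{(3,4)}((0,0,0,0),g)=\hat f^{(3,4)}((0,0,0,0),g)$ to kill a non-decaying term before the $\widehat\tau_{P_1}$ piece can be matched. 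Without this device there is no ``genuinely integrable error'' supported on the reducible locus to speak of --- the reducible contributions are precisely the divergent pieces, and the convergence is obtained by \emph{engineering} their cancellation, not by bounding them. Finally, note that the paper actually proves convergence of the $s$-twisted integral \eqref{bound} for $\mathrm{Re}(s)>5/3$, which is also used as the convergence input in Lemma~\ref{l4}; a proof aimed only at the $s=2$ case would have to be redone for that lemma.
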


We set
\begin{multline*}
\tZ(\varphi,s,T) = \int_{L(F)\bsl L(\A)} |\det(l)|^{-s} \Big\{  \sum_{\mu\in V^0(F)}\varphi(\mu\cdot l) - \int_{V(\A)} \varphi(u\cdot l)\, \d u  \, \widehat\tau_Q(H_Q(l)-T)   \\
 - \sum_{\delta\in B(F)\bsl L(F)}\sum_{x\in F^\times } \, \int_\A \int_\A \varphi((0,x,x_3,x_4)\cdot \delta l) \, \d x_3 \, \d x_4  \, \widehat\tau_{P_1}(H_{P_1}(\delta l)-T)  \Big\}\d l
\end{multline*}
where we put $B=L\cap P_0$.
Note that $B$ is the lower triangular subgroup of $L$ if we realize $L$ as $\GL(2)$ according to Section \ref{g2}.
The following lemma is proved in Section \ref{prooflem2}.
\begin{lem}\label{l4}
The integral $\tZ(\varphi,s,T)$ is absolutely convergent for $\mathrm{Re}(s)>5/3$.
For $\mathrm{Re}(s)>5/3$, we have
\begin{align*}
\tZ(\varphi,s,T&)=  Z_+(\varphi,s)+Z_+(\hat\varphi,2-s) - \frac{1}{s} \, \vol_L \, \varphi(0) + \int_1^{e^{T_2}} t^{-(s-2)}\, \d^\times t \, \vol_L \, \hat\varphi(0) \\
& - \frac{\vol_{M_0}}{c_F(3s-1)}\, \Sigma_1(\varphi,2/3) + \frac{\vol_{M_0}}{c_F(3s-5)}\, \Sigma_1(\hat\varphi,2/3) \\
& - \frac{\vol_{M_0}}{c_F \, s} \int_{\A^\times}\int_\A \int_\A |a|^2 \hat\varphi(0,a,x_3,x_4) \, \d x_3\, \d x_4 \, \d^\times a \\
& + \frac{\vol_{M_0}}{c_F^2} \, \int_{F^\times\bsl\A^\times} \int_{\A^\times}
\int_{\A^{\oplus 2}}  \varphi(0,a,x_3,x_4) \, \d x_3 \, \d x_4 \\
&    \quad \times |a|^{s} |b|^{2-s}(\hat\tau_Q(H_Q(m))-\hat\tau_{P_1}(H_{P_1}(m)-T))\,\d^\times a\, \d^\times b  
\end{align*}
where $m=(a^{-1},b)\in M_0$.
In particular, when $s=2$, we have
\[
\int_{F^\times\bsl\A^\times}(\hat\tau_Q(H_Q(m))-\hat\tau_{P_1}(H_{P_1}(m)-T))\, \d^\times b=c_F  \, (T_1 - \log|a|)
\]
for any $a$ in $\A^\times$.
\end{lem}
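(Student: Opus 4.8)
\emph{Strategy.} The plan is to reduce the whole statement to the already-established continuation \eqref{prin} of the untruncated zeta integral $Z(\varphi,s)$, carrying the two cut-offs $\widehat\tau_Q(H_Q(l)-T)$ and $\widehat\tau_{P_1}(H_{P_1}(\delta l)-T)$ through the computation; the proof of \eqref{prin} itself (in the style of \cite{Shintani,Wright,Kogiso}) is what one re-runs with the cut-offs inserted in order to get absolute convergence. Throughout I use the standing normalization $\varphi(x\cdot k)=\varphi(x)$ ($k\in\bK$), the hypothesis $T\in\fa_0^+$ (so $T_1,T_2>0$), and the identification $M_0\ni(a,b)\mapsto\diag(a,b)\in L=\GL(2)$ of Section~\ref{g2}, under which $\det(a,b)=ab$, $\La\varpi_2,H_Q(l)\Ra=\log|\det l|$, and $\La\varpi_1,H_{P_1}((a,b))\Ra=\log|b|$. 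Since $\int_{V(\A)}\varphi(u\cdot l)\,\d u=|\det l|^{2}\hat\varphi(0)$, for $\mathrm{Re}(s)>2$ one has
\[
\tZ(\varphi,s,T)=Z(\varphi,s)-\hat\varphi(0)\!\int_{L(F)\bsl L(\A)}\!\!|\det l|^{2-s}\widehat\tau_Q(H_Q(l)-T)\,\d l-\int_{L(F)\bsl L(\A)}\!\!|\det l|^{-s}\Theta_1(l)\,\d l,
\]
where $\Theta_1(l)$ denotes the third term (the $\delta$-sum) inside the braces in the definition of $\tZ(\varphi,s,T)$.

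\emph{The formula.} The middle integral is elementary: as $\widehat\tau_Q(H_Q(l)-T)=1\iff|\det l|>e^{T_2}$, it equals $\vol_L\hat\varphi(0)\int_{e^{T_2}}^{\infty}t^{2-s}\,\d^\times t$, and subtracting it from the term $\frac{1}{s-2}\vol_L\hat\varphi(0)=\vol_L\hat\varphi(0)\int_{1}^{\infty}t^{2-s}\,\d^\times t$ of \eqref{prin} produces $\vol_L\hat\varphi(0)\int_{1}^{e^{T_2}}t^{-(s-2)}\,\d^\times t$. For $\Theta_1$ I would unfold the $\delta$-sum — its summand is left $B(F)$-invariant, because the $(x_3,x_4)$-integral absorbs the affine $B(F)$-action (whose linear part has determinant $\lambda_1\lambda_2^{-3}$, of absolute value $1$), $x\mapsto x\lambda_1^{-1}$ permutes $F^\times$, and $\widehat\tau_{P_1}(H_{P_1}(bl)-T)=\widehat\tau_{P_1}(H_{P_1}(l)-T)$ for $b\in B(F)$ by the product formula — then pass to Iwasawa coordinates $l=nmk$ with $n$ in the unipotent radical $N_B\subset N_1$ of $B$, $m\in M_0(\A)$, $k\in\bK\cap L(\A)$, whereupon the $n$- and $k$-integrals drop out and the measure contributes $\delta_B(m)^{-1}$. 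Using the explicit diagonal action $(0,x,x_3,x_4)\cdot m=(0,x\lambda_1^{-1},x_3\lambda_2^{-1},x_4\lambda_1\lambda_2^{-2})$, the inner integral becomes $|\lambda_1|^{-1}|\lambda_2|^{3}\Psi(x\lambda_1^{-1})$ with $\Psi(t):=\int_{\A^{\oplus2}}\varphi(0,t,x_3,x_4)\,\d x_3\,\d x_4$, and since every remaining factor depends on $m$ only through $|\lambda_1|,|\lambda_2|$, the sum over $x\in F^\times$ unfolds the $\lambda_1$-torus from $F^\times\bsl\A^\times$ to $\A^\times$; collecting powers ($|\det m|^{-s}\delta_B(m)^{-1}|\lambda_1|^{-1}|\lambda_2|^{3}=|\lambda_1|^{-s}|\lambda_2|^{2-s}=|a|^{s}|b|^{2-s}$ for $m=(a^{-1},b)$) and the constant $\vol_{M_0}/c_F^{2}$ yields
\[
\int_{L(F)\bsl L(\A)}|\det l|^{-s}\Theta_1(l)\,\d l=\frac{\vol_{M_0}}{c_F^{2}}\int_{F^\times\bsl\A^\times}\!\int_{\A^\times}\!\Psi(a)\,|a|^{s}|b|^{2-s}\,\widehat\tau_{P_1}(H_{P_1}(m)-T)\,\d^\times a\,\d^\times b,\qquad m=(a^{-1},b).
\]
The last term of \eqref{prin} equals the same expression with $\widehat\tau_Q(H_Q(m))$ in place of $\widehat\tau_{P_1}(H_{P_1}(m)-T)$, since $\widehat\tau_Q(H_Q(m))=1\iff|b|>|a|$ and $\int_{F^\times\bsl\A^\times}|b|^{2-s}\mathbf{1}_{|b|>|a|}\,\d^\times b=c_F|a|^{2-s}/(s-2)$; hence subtracting $\Theta_1$ turns $\widehat\tau_Q(H_Q(m))$ into $\widehat\tau_Q(H_Q(m))-\widehat\tau_{P_1}(H_{P_1}(m)-T)$. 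The two potential $s=2$ poles of \eqref{prin} thereby disappear (the truncated integrals $\int_1^{e^{T_2}}$ and $\int_{F^\times\bsl\A^\times}(\widehat\tau_Q-\widehat\tau_{P_1})$ being entire in $s$), and one obtains the asserted identity for $\mathrm{Re}(s)>2$. As $Z_+$ is entire and every surviving term is meromorphic with poles only at $s=5/3,1/3,0$, the identity extends to $\mathrm{Re}(s)>5/3$.

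\emph{Absolute convergence for $\mathrm{Re}(s)>5/3$.} Here I would argue directly on the defining integral. Split $L(F)\bsl L(\A)$ at $|\det l|=1$ (on the lower part $\widehat\tau_Q=0$, as $T_2>0$) and apply Poisson summation for the self-dual lattice $V(F)\subset V(\A)$ (pairing $[\,\cdot,\cdot\,]$, character $\psi_F$) on the upper part. The integrand then reduces, modulo terms that decrease rapidly towards the cusp of $L(F)\bsl L(\A)=\GL(2,F)\bsl\GL(2,\A)$ — whose weighted integral converges absolutely for all $s$, exactly as for $Z_+$ — to the contributions of the low-dimensional $L$-orbits in $V$: the zero cubic, cancelled by the subtraction of $|\det l|^{2}\hat\varphi(0)\widehat\tau_Q$; the binary cubics with a rational linear factor, whose non-decaying part at the cusp is cancelled by $\Theta_1$ (replacing the sum over the quadratic cofactor by its adelic integral leaves only an integrable error — this is the $\GL(2)$-analogue of the estimate proving Lemma~\ref{tJconv}); and the cube locus $v^{3}\cdot L$. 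What survives is governed by the cube locus, i.e.\ essentially by the mass $\Sigma_1(\hat\varphi,2/3)=\int_{\A^\times}|a|^{2/3}\hat\varphi(0,0,0,a)\,\d^\times a$ weighted against $|\det l|^{-s}$ along the cusp, whose integral converges precisely for $\mathrm{Re}(s)>5/3$. The hard part is to make these estimates uniform via reduction theory for $\GL(2)$; but the bookkeeping is the same as in the untruncated argument behind \eqref{prin} (and in \cite{Kogiso}), with the cut-offs inserted. Absolute convergence together with the formula then identifies $\tZ(\varphi,s,T)$ on all of $\mathrm{Re}(s)>5/3$.

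\emph{The value at $s=2$.} By Section~\ref{g2}, for $m=(a^{-1},b)$ one has $\widehat\tau_Q(H_Q(m))=1\iff|b|>|a|$ and $\widehat\tau_{P_1}(H_{P_1}(m)-T)=1\iff|b|>e^{T_1}$, so $\widehat\tau_Q(H_Q(m))-\widehat\tau_{P_1}(H_{P_1}(m)-T)$, as a function of $b\in F^\times\bsl\A^\times$, is a difference of characteristic functions that agree for $|b|$ small and for $|b|$ large; in particular it is integrable. Using $\int_{F^\times\bsl\A^\times}g(|b|)\,\d^\times b=c_F\int_0^\infty g(t)\,\d^\times t$ we get
\[
\int_{F^\times\bsl\A^\times}\bigl(\widehat\tau_Q(H_Q(m))-\widehat\tau_{P_1}(H_{P_1}(m)-T)\bigr)\,\d^\times b=c_F\int_{|a|}^{e^{T_1}}\frac{\d t}{t}=c_F\,(T_1-\log|a|),
\]
independently of $a\in\A^\times$.
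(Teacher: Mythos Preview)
Your argument is correct, but it proceeds by a different route than the paper's. You write $\tZ=Z-(\text{two corrections})$ on $\mathrm{Re}(s)>2$, import the principal-part formula \eqref{prin} for $Z$ wholesale, and then compute the two correction integrals by elementary unfolding; the $\Sigma_1$-terms and the $\hat\varphi(0,a,\ldots)$-term thus come for free from \eqref{prin}, and only the last two terms of the stated formula require separate work. The paper instead applies the Poisson summation directly to $\tZ$, obtaining $Z_+(\varphi,s)+Z_+(\hat\varphi,2-s)+\cdots+I_1(\varphi,s)+I_2(\varphi,s,T)$, and then evaluates the boundary integrals $I_1$ (the cube-locus contribution, via Kogiso's method) and $I_2$ (the reducible-cubic contribution, via a further Poisson step and the identity $(m\cdot\varphi)^{(3,4)}(0,0,0,0)=(\widehat{m\cdot\varphi})^{(3,4)}(0,0,0,0)$) from scratch. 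Your approach is more economical and makes transparent why the two potential $s=2$ poles of \eqref{prin} are exactly cancelled by the two truncation terms; the paper's approach is more self-contained (indeed, the paper remarks that \eqref{prin} itself is established by the same computation that proves Lemma~\ref{l4}). For absolute convergence both approaches ultimately rely on the estimates proving Lemma~\ref{tJconv}; your sketch of this part is brief but points to the right cancellations, and the paper likewise invokes that lemma's proof rather than repeating it.
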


The set of $F$-rational points in $C$ is expressed as
\[
C(F)=\bigsqcup_{\delta\in Q(F)\bsl G(F)}\delta^{-1}\exp(V^0(F))\, U'(F) \, \delta
\]
(see, e.g., \cite[Theorem 5]{Hoffmann}).
Hence, by Lemmas \ref{change}, \ref{tJconv} and \ref{l4} we have
\[
J_C^T(f)=\tJ_C^T(f)=\tZ(f_{\bK,U'},2,T).
\]
It follows from \eqref{prin} and Lemma \ref{l4} that
\begin{align*}
& \tZ(f_{\bK,U'},2,T)-\lim_{s\to 2+0} \frac{d}{\d s}(s-2)Z(f_{\bK,U'},s) \\
&= \int_1^{e^{T_2}} \d^\times t \, \vol_L \, \widehat{f_{\bK,U'}}(0)\\
& \quad  +\frac{\vol_{M_0}}{c_F} \int_{\A^\times}(T_1 - \log|a|) \, |a|^2 \int_{\A^{\oplus 2}} f_{\bK,U'}(0,a,x_3,x_4)\, \d x_3 \, \d x_4 \, \d^\times a .
\end{align*}
We choose the basis $\{X_{\alpha_1},X_{\alpha_1+\alpha_2}\}$ of $V_1(F)$ so that $V_1(F)$ is identified with $F^{\oplus 2}$.
Since $\int_{\A^{\oplus 2}} f_{\bK,U'}(0,a,x_3,x_4)\, \d x_3 \, \d x_4=f_{\bK,N_1'}(0,a)$, it follows from the mean value formula~\eqref{mean} that
\begin{align*}
& \frac{\vol_{M_0}}{c_F} \int_{\A^\times}(T_1 - \log|a|) \, |a|^2 \int_{\A^{\oplus 2}} f_{\bK,U'}(0,a,x_3,x_4)\, \d x_3 \, \d x_4 \, \d^\times a \\
& =\frac{\vol_{M_0}}{c_F} \int_{\A^\times}(T_1 - \log|a|) \, |a|^2 f_{\bK,N_1'}(0,a) \, \d^\times a \\
& =\int_{M_1\cap P_0(F)\bsl M_1(\A)^1}\sum_{y\in F^\times } (T_1 - \log\|\mathrm{Ad}(m)^{-1}(0,y)\|) \,  f_{\bK,N_1'}(\mathrm{Ad}(m)^{-1}(0,y)) \, \d m \\
&=\vol_{M_1}\int_{\A^{\oplus 2} } (T_1 - \log\|(y_1,y_2)\|) \,  f_{\bK,N_1'}(y_1,y_2) \, \d y_1 \, \d y_2.
\end{align*}
Thus, the proof of Theorem \ref{main} is completed.
The above identification $N_1/N_1'(\A)\cong \A^{\oplus 2}$ normalizes the height function in Theorem \ref{main}.

\section{Proofs of two lemmas}
\subsection{Proof of Lemma \ref{tJconv}}\label{prooflem1}

We shall prove that the integral
\begin{multline}\label{bound}
\int_{Q(F)\bsl G(\A)} \Big| e^{-(s-2)\varpi_2(H_Q(g))} \Big\{  \sum_{\mu\in V^0(F)}\sum_{\nu\in U'(F)} f(g^{-1}\exp(\mu)\nu  g)    \\
- \int_{U(\A)} f(g^{-1}u g)\, \d u \, \widehat\tau_Q(H_Q(g)-T)   \\
- \sum_{\delta\in P_0(F)\bsl Q(F)}\sum_{x\in F^\times }  \int_{N_1'(\A)} f(g^{-1}\delta^{-1} \exp(x X_{\alpha_1+\alpha_2}) n' \delta g)\, \d n' \, \widehat\tau_{P_1}(H_{P_1}(\delta g)-T)  \Big\} \, \Big|  \,\d g
\end{multline}
is convergent for $\mathrm{Re}(s)>5/3$.
Lemma \ref{tJconv} follows from the convergence of \eqref{bound} for $s=2$.
We note that
\[
\varpi_2(H_L(l))=\log|\det(l)| \qquad (l\in L(\A)).
\]

To begin with, \eqref{bound} is bounded as follows:
\[
\eqref{bound}\leq \eqref{c1}+\eqref{c2}+\eqref{c3},
\]
\begin{equation}\label{c1}
\int_{L(F)\bsl L(\A)} |\det(l)|^{-\mathrm{Re}(s)} \sum_{\mu\in V^0(F)}|f|_{\bK,U'}(\mu\cdot l)\, \widehat\tau_Q(T-H_Q(l)) \, \d l,
\end{equation}
\begin{equation}\label{c2}
\int_{Q(F)\bsl G(\A)}\Big| e^{-(s-2)\varpi_2(H_Q(g))}\sum_{\mu\in V^0(F)}\sum_{x'\in F^\times} \tilde f(\exp(\mu),x',g)\,  \widehat\tau_Q(H_Q(g)-T)\, \Big| \, \d g ,
\end{equation}
\begin{multline}\label{c3}
\int_{Q(F)\bsl G(\A)} \Big|  e^{-(s-2)\varpi_2(H_Q(g))} \Big\{  \sum_{\mu\in V^0(F)}\tilde f(\exp(\mu),0,g)  \,  \widehat\tau_Q(H_Q(g)-T)  \\
- \int_{U(\A)} f(g^{-1}u g)\, \d u \, \widehat\tau_Q(H_Q(g)-T)   \\
- \sum_{\delta\in P_0(F)\bsl Q(F)}\sum_{x\in F^\times }  \int_{N_1'(\A)} f(g^{-1}\delta^{-1} \exp(x X_{\alpha_1+\alpha_2}) n' \delta g)\, \d n' \, \widehat\tau_{P_1}(H_{P_1}(\delta g)-T)  \Big\} \, \Big| \, \d g,
\end{multline}
where $|f|$ means the function $g\mapsto |f(g)|$ and we set
\[
\tilde f(h,x',g)=\int_{\A}f(g^{-1}h\exp(y'X_{3\alpha_1+2\alpha_2})g)\, \psi_F(x'y') \,  \d y'.
\]
The convergence of \eqref{c1} is obvious for any $s$.
The integral~\eqref{c2} is bounded by
\begin{multline*}
\int_{L(F)\bsl L(\A)}  |\det(l)|^{-\mathrm{Re}(s)} \sum_{\mu\in V^0(F)} \sum_{x'\in F^\times} \\
\int_\bK\, \big|\, \tilde f(\exp(\mu\cdot l),x' \det(l),k)\, \d k\, \widehat\tau_Q(H_Q(l)-T) \, \big|  \, \d l
\end{multline*}
and this is convergent for any $s$ due to the component $x' \det(l)$.
Hence, it is enough to consider \eqref{c3}.
Note that $\tilde f(\exp(\mu),0,g) = \int_{U'(\A)} f(g^{-1}\exp(\mu)u  g)\, \d u$.

For $f\in C_c^\inf(G(\A))$, $x\in V(\A)$, and $g\in Q(F)\bsl G(\A)$, we set
\[
\check f(x,g)=\tilde f(\exp(x),0,g),\quad \hat f(x,g)=\int_{V(\A)}\check f(y,g)\, \psi_F([x,y])  \, \d y.
\]
For the Iwasawa decomposition $g=ulk\in Q(F)\bsl G(\A)$, $(u\in U(F)\bsl U(\A)$, $l\in L(F)\bsl L(\A)$, $k\in\bK)$, it follows that
\[
\check f(x,ulk)= |\det(l)|\times \int_\bK \int_{U'(\A)} f(k^{-1}\exp(x)u'k)\, \d u' \, \d k \quad (x\in V(\A)).
\]
For each fixed $g$ in $Q(F)\bsl G(\A)$, $\check f(x,g)$ and $\hat f(x,g)$ are regarded as Schwartz-Bruhat functions on $V(\A)$.
For any test function $\varphi$ on $V(\A)$, we define $\varphi^{(4)}$ and $\varphi^{(3,4)}$ by
\[
\varphi^{(4)}(x_1,x_2,x_3,x_4)=\int_{\A}\varphi(x_1,x_2,x_3,y_4)\, \psi_F(x_4y_4) \, \d y_4 ,
\]
\[
\varphi^{(3,4)}(x_1,x_2,x_3,x_4)=\int_{\A}\varphi(x_1,x_2,y_3,y_4)\, \psi_F(x_3y_3)\, \psi_F(x_4y_4) \, \d y_3  \, \d y_4 .
\]
The singular set $\{ x\in V \mid P(x)=0 \}$ is decomposed into the three orbits $S_0=\{(0,0,0,0)\}$, $S_1=(0,0,0,1)\cdot L$, and $S_2=(0,0,1,0)\cdot L$.
We write $S_j(F)$ for the set of $F$-rational points of $S_j$.
By the Poisson summation formula, one gets
\begin{multline*}
\sum_{\mu\in V^0(F)}\check f(\mu,g) = -\check f((0,0,0,0),g)  -\sum_{\mu\in S_1(F)}\check f(\mu, g)  -\sum_{\mu\in S_2(F)}\check f(\mu,g) \\
 + \sum_{\mu\in V^0(F)}\hat f(\mu, g) + \int_{U(\A)} f(g^{-1}u g)\, \d u   +\sum_{\mu\in S_1(F)}\hat f(\mu, g) +\sum_{\mu\in S_2(F)}\hat f(\mu, g) .
\end{multline*}
Using the decompositions
\[
S_1(F)=\bigsqcup_{\delta\in B(F)\bsl L(F)}\bigsqcup_{x_4\in F^\times}\{(0,0,0,x_4)\cdot \delta \},
\]
\[
S_2(F)=\bigsqcup_{\delta\in B(F)\bsl L(F)}\bigsqcup_{x_3\in F^\times, \, x_4\in F} \{(0,0,x_3,x_4)\cdot \delta\},
\]
we have the following bound
\[
\eqref{c3}\leq \eqref{e1}+\eqref{e2}+\eqref{e3}   ,
\]
where
\begin{equation}\label{e1}
\int_{Q(F)\bsl G(\A)}\Big| \, e^{-(s-2)\varpi_2(H_Q(g))} \check f((0,0,0,0),g) \widehat\tau_Q(H_Q(g)-T)\, \Big| \, \d g
\end{equation}
is absolutely convergent for $\mathrm{Re}(s)>0$,
\begin{equation}\label{e2}
\int_{Q(F)\bsl G(\A)}\Big|\, e^{-(s-2)\varpi_2(H_Q(g))}\sum_{\mu\in V^0(F)}\hat f(\mu,g) \, \widehat\tau_Q(H_Q(g)-T)\, \Big| \, \d g
\end{equation}
is absolutely convergent for any $s$, and
\begin{multline}\label{e3}
\int_{P_0(F)\bsl G(\A)}\Big|\, e^{-(s-2)\varpi_2(H_Q(g))}\Big\{ -\sum_{x_4\in F^\times}\check f((0,0,0,x_4), g) \,  \widehat\tau_Q(H_Q(g)-T) \\
- \sum_{x_3\in F^\times, \, x_4\in F}\check f((0,0,x_3,x_4),g) \,  \widehat\tau_Q(H_Q(g)-T) \\
+  \sum_{x_4\in F^\times}\hat f((0,0,0,x_4) ,g) \, \widehat\tau_Q(H_Q(g)-T) \\
+ \sum_{x_3\in F^\times, \, x_4\in F}\hat f((0,0,x_3,x_4) ,g) \, \widehat\tau_Q(H_Q(g)-T) \\
- \sum_{x_2\in F^\times }  \int_{A^{\oplus 2}}\check f((0,x_2,y_3,y_4) , g)\, \d y_3 \, \d y_4 \, \widehat\tau_{P_1}(H_{P_1}(g)-T)  \Big\} \, \Big| \, \d g.
\end{multline}
The main difficulty comes from \eqref{e3} for the proof of the convergence of \eqref{bound}.
We use Kogiso's method \cite{Kogiso} to find a convergence range of \eqref{e3}.
Applying the method to \eqref{e3}, we add two dumping terms and divide \eqref{e3} into two integrals as follows:
\[
\eqref{e3}\leq \eqref{e4} + \eqref{e5},
\]
\begin{multline}\label{e4}
\int_{P_0(F)\bsl G(\A)}\Big|\, e^{-(s-2)\varpi_2(H_Q(g))}\,  \widehat\tau_Q(H_Q(g)-T)  \\
\Big\{ -\sum_{x_4\in F^\times}\check f((0,0,0,x_4), g)  +  \sum_{x_4\in F^\times}\hat f((0,0,0,x_4) ,g) \\
-  \hat f^{(4)}((0,0,0,0) ,g)+ \check f^{(4)}((0,0,0,0), g) \Big\} \, \Big| \, \d g,
\end{multline}
\begin{multline}\label{e5}
 \int_{P_0(F)\bsl G(\A)} \Big| \, e^{-(s-2)\varpi_2(H_Q(g))}\Big\{- \sum_{x_3\in F^\times, \, x_4\in F}\check f((0,0,x_3,x_4),g) \,  \widehat\tau_Q(H_Q(g)-T) \\
+ \sum_{x_3\in F^\times, \, x_4\in F}\hat f((0,0,x_3,x_4) ,g) \, \widehat\tau_Q(H_Q(g)-T) \\
+ \hat f^{(4)}((0,0,0,0) ,g) \, \widehat\tau_Q(H_Q(g)-T)- \check f^{(4)}((0,0,0,0), g) \, \widehat\tau_Q(H_Q(g)-T)\\
- \sum_{x_2\in F^\times }  \int_{A^{\oplus 2}}\check f((0,x_2,y_3,y_4) , g)\, \d y_3 \, \d y_4 \, \widehat\tau_{P_1}(H_{P_1}(g)-T)  \Big\}\, \Big| \, \d g.
\end{multline}

To study \eqref{e4} and \eqref{e5}, we need the following notations:
\[
g=nmtk\in G(\A), \quad n\in N_0(F)\bsl N_0(\A) , \;\; m\in M_0(F)\bsl M_0(\A)^1, \;\; k\in \bK, 
\]
\begin{equation}\label{dia}
t=(a^{-1}z,az)  \in A_0^+=\{ (t_1,t_2)\in M_0(\R)\mid t_1,t_2>0 \} \subset M_0(\A) ,
\end{equation}
where an embedding $\R\subset \A$ is chosen so that the absolute value of $r\in\R$ equals the idele norm of $r$.
By using the Poisson summation formula for $x_4$ and dividing the integration domain of $a$ and $z$ into $a^{-3}z>1$ or $a^{-3}z<1$, one gets
\[
\eqref{e4}\leq \eqref{e6}+\eqref{e7},
\]
\begin{multline}\label{e6}
 \int_{\bK}\int_{M_0(F)\bsl M_0(\A)^1} \int_{A_0^+,\; z^2>e^{T_2},\; a^{-3}z>1} z^{-2(\mathrm{Re}(s)-2)} \\
\Big\{ \sum_{x_4\in F^\times} | \check f((0,0,0,a^{-3}z^{-1}x_4), mk)|\, a^{-2}z^{-4} +  \sum_{x_4\in F^\times} | \hat f((0,0,0,a^{-3}zx_4) ,mk)| \, a^{-2} \\
+ | \hat f^{(4)}((0,0,0,0) ,mk)  | \, az^{-1}  + | \check f^{(4)}((0,0,0,0),mk) | \, az^{-3} \Big\}\d^\times a\, \d^\times z\, \d^1 m\, \d k
\end{multline}
is convergent for $\mathrm{Re}(s)>5/3$, and
\begin{multline}\label{e7}
 \int_{\bK}\int_{M_0(F)\bsl M_0(\A)^1} \int_{A_0^+,\; z^2>e^{T_2},\; a^{-3}z<1} z^{-2(\mathrm{Re}(s)-2)} \\
\Big\{ \sum_{x_4\in F^\times} |\check f^{(4)}((0,0,0,a^3zx_4), mk) |\, az^{-3} +  \sum_{x_4\in F^\times} | \hat f^{(4)}((0,0,0,a^3z^{-1}x_4) ,mk)| \, az^{-1} \\
+ | \hat f((0,0,0,0) ,mk)  | \, a^{-2}  + | \check f((0,0,0,0),mk) | \, a^{-2}z^{-4} \Big\}\d^\times a\, \d^\times z\, \d^1 m\, \d k
\end{multline}
is also convergent for $\mathrm{Re}(s)>5/3$.

Next we consider the integral \eqref{e5}.
We again apply the Poisson summation formula to \eqref{e5}.
If the integration domain of $a$ and $z$ are divided into $az<e^{T_1}$ or $az>e^{T_1}$, then we obtain
\[
\eqref{e5}\leq \eqref{e8}+\eqref{e9},
\]
\begin{multline}\label{e8}
 \int_{\bK}\int_{M_0(F)\bsl M_0(\A)^1} \int_{A_0^+,\; z^2>e^{T_2},\; az<e^{T_1}} z^{-2(\mathrm{Re}(s)-2)} \\
\Big\{ \sum_{x_3\in F^\times } \int_\A |\check f((0,0,a^{-1}z^{-1}x_3,x_4),mk)|\, \d x_4 \, az^{-3} \\
+ \sum_{x_3\in F^\times}\int_\A |\hat f((0,0,a^{-1}zx_3,x_4) ,mk)|\, \d x_4 \, az^{-1} \\
+ |\hat f^{(4)}((0,0,0,0) ,mk) | \, az^{-1} + |\check f^{(4)}((0,0,0,0),mk) | \, az^{-3}  \Big\}\d^\times a\, \d^\times z\, \d^1 m\, \d k,
\end{multline}
\begin{multline}\label{e9}
 \int_{P_0(F)\bsl G(\A)} \Big| \, e^{-(s-2)\varpi_2(H_Q(g))}\\
\Big\{- \sum_{x_3\in F^\times, \, x_4\in F^\times}\check f^{(4)}((0,0,x_3,x_4),g) \,  \widehat\tau_0(H_0(g)-T) \\
+ \sum_{x_3\in F^\times, \, x_4\in F^\times}\hat f^{(4)}((0,0,x_3,x_4) ,g) \, \widehat\tau_0(H_0(g)-T) \\
- \sum_{x_3\in F^\times}\check f^{(3,4)}((0,0,x_3,0),g) \,  \widehat\tau_0(H_0(g)-T) \\
+ \sum_{x_3\in F^\times}\hat f^{(3,4)}((0,0,x_3,0) ,g) \, \widehat\tau_0(H_0(g)-T) \\
- \sum_{x_2\in F^\times }  \int_{\A^{\oplus 2}} \check f((0,x_2,y_3,y_4) , g)\, \d y_3 \, \d y_4 \, \widehat\tau_{P_1}(H_{P_1}(g)-T)  \Big\} \, \Big| \, \d g.
\end{multline}
It is obvious that \eqref{e8} is convergent for $\mathrm{Re}(s)>1$.
Notice that, in the above calculation, we used the remarkable equality
\[
\check f^{(3,4)}((0,0,0,0),g)=\hat f^{(3,4)}((0,0,0,0) ,g).
\]
Since
\[
\sum_{x_3\in F^\times}\hat f^{(3,4)}((0,0,x_3,0) ,g)=\sum_{x_2\in F^\times }  \int_{A^{\oplus 2}} \check f((0,x_2,y_3,y_4) , g)\, \d y_3 \, \d y_4,
\]
one finds
\[
\eqref{e9}\leq \eqref{e12}+\eqref{e13},
\]
\begin{multline}\label{e12}
 \int_{\bK}\int_{M_0(F)\bsl M_0(\A)^1} \int_{A_0^+,\; z^2>e^{T_2},\; az>e^{T_1}} z^{-2(\mathrm{Re}(s)-2)} \\
\Big\{ \sum_{x_3\in F^\times, \, x_4\in F^\times} |\check f^{(4)}((0,0,a^{-1}z^{-1}x_3,a^3 zx_4),mk)|\, az^{-3}  \\
+ \sum_{x_3\in F^\times, \, x_4\in F^\times} |\hat f^{(4)}((0,0,a^{-1}zx_3, a^3z^{-1}x_4) ,mk)|\, az^{-1}   \\
+ \sum_{x_3\in F^\times}|\check f^{(3,4)}((0,0,azx_3,0),mk)| \, a^2z^{-2}  \Big\}\d^\times a\, \d^\times z\, \d^1 m\, \d k,
\end{multline}
\begin{multline}\label{e13}
\int_{M_0(F)\bsl M_0(\A)^1} \int_{A_0^+,\; z^2<e^{T_2},\; az>e^{T_1}}  z^{-2(\mathrm{Re}(s)-2)}\\
 \sum_{x_2\in F^\times }  \int_{\A^{\oplus 2}} \big| \check f((0,az^{-1}x_2,y_3,y_4) ,mk) \, \big| \, a^2z^{-2} \, \d y_3 \, \d y_4 \, \d^\times a\, \d^\times z\, \d^1 m\, \d k .
\end{multline}
\eqref{e12} comes from the first three terms in \eqref{e9} and it is clear that \eqref{e12} converges for $\mathrm{Re}(s)>1$.
The remaining part of \eqref{e9} is bounded by \eqref{e13}.
Since $az^{-1}>e^{T_1-T_2}$ and $(az^{-1})^{-1/2}e^{T_1/2}<z<e^{T_2/2}$ hold over the integration domain, one finds that \eqref{e13} is absolutely convergent for any $s$.
Hence, \eqref{e9} is convergent for $\mathrm{Re}(s)>1$. 
Thus, the proof is completed.

\subsection{Proof of Lemma \ref{l4}}\label{prooflem2}

It follows from the proof of Lemma \ref{tJconv} that $\tZ(\varphi,s,T)$ absolutely converges for $\mathrm{Re}(s)>5/3$.
Hence, we will prove the latter part of Lemma \ref{l4}.
We may assume that $\varphi(x\cdot k)=\varphi(x)$ holds for any $x\in V(\A)$ and any $k\in\bK\cap L(\A)$ without loss of generality.
Furthermore, we set
\[
(l\cdot \varphi)(x)=\varphi(x\cdot l) \qquad (x\in V(\A),\;\; l\in L(\A))
\]
and the notation $\varphi^{(4)}$, $\hat\varphi^{(3,4)}$ etc. is defined in the same manner as in Section~\ref{prooflem1}.
Note that $(\widehat{l\cdot\varphi})(x)=\hat\varphi(x\cdot l^\iota)\times |\det(l)|^2$ holds, where $l^\iota=\det(l)^{-1}l$ $(l\in L)$.

By the Poisson summation formula, for $\mathrm{Re}(s)>5/3$ we have
\begin{align*}
\tZ(\varphi,s,T)= & Z_+(\varphi,s)+Z_+(\hat\varphi,2-s) - \frac{1}{s} \, \vol_L \, \varphi(0) + \int_1^{e^{T_2}} t^{-(s-2)}\, \d^\times t \, \vol_L \, \hat\varphi(0) \\
& + I_1(\varphi,s)+I_2(\varphi,s,T),
\end{align*}
\begin{align*}
I_1(\varphi,s)=& \int_{B(F)\bsl L(\A)} |\det(l)|^{-s} \Big\{ -\sum_{x_4\in F^\times}(l\cdot \varphi)(0,0,0,x_4) + \sum_{x_4\in F^\times}(\widehat{l\cdot \varphi})(0,0,0,x_4) \\
&- (\widehat{l\cdot \varphi})^{(4)}(0,0,0,0)+(l\cdot \varphi)^{(4)}(0,0,0,0) \Big\} \, \widehat\tau_Q(H_L(l)) \, \d l,
\end{align*}
\begin{align*}
I_2(\varphi,s,T)=& \int_{B(F)\bsl L(\A)} |\det(l)|^{-s} \Big\{ -\sum_{x_3\in F^\times,\, x_4\in F}(l\cdot \varphi)(0,0,x_3,x_4) \, \widehat\tau_Q(H_L(l)) \\
& + \sum_{x_3\in F^\times,\, x_4\in F}(\widehat{l\cdot \varphi})(0,0,x_3,x_4) \, \widehat\tau_Q(H_L(l)) \\
&+ (\widehat{l\cdot \varphi})^{(4)}(0,0,0,0) \, \widehat\tau_Q(H_L(l))-(l\cdot \varphi)^{(4)}(0,0,0,0) \, \widehat\tau_Q(H_L(l)) \\
& - \sum_{x\in F^\times } \, \int_{\A^{\oplus 2}} (l\cdot \varphi)(0,x,x_3,x_4) \, \d x_3 \, \d x_4  \, \widehat\tau_{P_1}(H_{P_1}(l)-T)  \Big\} \d l. 
\end{align*}
It follows from the proof of Lemma \ref{tJconv} (namely, the convergence of \eqref{e4}) that $I_1(\varphi,s)$ is absolutely convergent for $\mathrm{Re}(s)>5/3$.
Using the argument in \cite[Proof of Proposition 2.5, p.242--245]{Kogiso} and the meromorphic continuation of $\Sigma_1(\varphi,s)$, one can show the equality
\[
I_1(\varphi,s)=- \frac{\vol_{M_0}}{c_F\, (3s-1)}\, \Sigma_1(\varphi,2/3) + \frac{\vol_{M_0}}{c_F\, (3s-5)}\, \Sigma_1(\hat\varphi,2/3).
\]

The integral $I_2(\varphi,s,T)$ is absolutely convergent for $\mathrm{Re}(s)>1$ by the proof of Lemma \ref{tJconv} (namely, the convergence of \eqref{e5}).
Let $N_B=N_0\cap L$ and set
\[
l=nmk\in B(F)\bsl L(\A), \;\; n\in N_B(F)\bsl N_B(\A) , \;\; m\in M_0(F)\bsl M_0(\A), \;\; k\in \bK\cap L(\A) .
\]
Then, one finds
\begin{multline*}
I_2(\varphi,s,T)= \int_{M_0(F)\bsl M_0(\A)}  |\det(m)|^{-s}\delta_B(m)^{-1} \\
 \Big\{ \Big(- \sum_{x_3\in F}(m\cdot \varphi)^{(4)}(0,0,x_3,0)
+ \sum_{x_3\in F}(\widehat{m\cdot \varphi})^{(4)}(0,0,x_3,0)\Big) \widehat\tau_Q(H_L(m)) \\
- \sum_{x_2\in F^\times}\int_{\A^{\oplus 2}}(m\cdot \varphi)(0,x_2,x_3,x_4)\, \d x_3 \, \d x_4 \,\widehat\tau_{P_1}(H_{P_1}(m)-T)\Big\} \d m .
\end{multline*}
Since $(m\cdot \varphi)^{(3,4)}(0,0,0,0) = (\widehat{m\cdot \varphi})^{(3,4)}(0,0,0,0)$, using the  Poisson summation formula, we get
\begin{multline*}
I_2(\varphi,s,T)= \int_{M_0(F)\bsl M_0(\A)}  \,|\det(m)|^{-s}\delta_B(m)^{-1} \\
 \Big\{ \Big(- \sum_{x_3\in F^\times}(m\cdot \varphi)^{(3,4)}(0,0,x_3,0)
+ \sum_{x_3\in F^\times }(\widehat{m\cdot \varphi})^{(3,4)}(0,0,x_3,0) \Big) \widehat\tau_Q(H_L(m)) \\
- \sum_{x_2\in F^\times}\int_{\A^{\oplus 2}}(m\cdot \varphi)(0,x_2,x_3,x_4)\, \d x_3 \, \d x_4 \,\widehat\tau_{P_1}(H_{P_1}(m)-T)\Big\}\, \d m.
\end{multline*}
Finally, with the notation $m=(a,b)$ we derive
\begin{multline*}
I_2(\varphi,s,T)=- \frac{\vol_{M_0}}{c_F \, s} \int_{\A^\times}\int_{\A^{\oplus 2}} |x|^2 \hat\varphi(0,x,x_3,x_4) \, \d x_3\, \d x_4 \, \d^\times x \\
+\frac{\vol_{M_0}}{c_F^2} \, \int_{F^\times\bsl\A^\times} \int_{\A^\times}
\int_{\A^{\oplus 2}}  \varphi(0,a^{-1},x_3,x_4) \, \d x_3 \, \d x_4 \\
    \quad \times |a|^{-s} |b|^{2-s}
(\hat\tau_Q(H_Q(m))-\hat\tau_{P_1}(H_{P_1}(m)-T))\,\d^\times a\, \d^\times b   
\end{multline*}
from the facts
\[
(m\cdot \varphi)^{(3,4)}(0,0,x,0)=\int_{\A^{\oplus 2}}(\widehat{m\cdot \varphi})(0,3x,x_3,x_4) \, \d x_3 \, \d x_4,
\]
\[
(\widehat{m\cdot \varphi})^{(3,4)}(0,0,x,0)=\int_{\A^{\oplus 2}}(m\cdot \varphi)(0,3x,x_3,x_4) \, \d x_3 \, \d x_4.
\]
Hence, the proof of Lemma \ref{l4} is completed.


\begin{thebibliography}{99}

\bibitem[Ar1]{Arthur1}J. Arthur, A measure on the unipotent variety, Canad. J. Math. {\bf 37} (1985), 1237--1274.

\bibitem[Ar2]{Arthur2}J. Arthur, An introduction to the trace formula, Harmonic analysis, the trace formula, and Shimura varieties, 1--263, Clay Math. Proc., 4, Amer. Math. Soc., Providence, RI, 2005.


\bibitem[BS]{BS}F. van der Blij, T. A. Springer, The arithmetics of octaves and of the group G2, Nederl. Akad. Wetensch. Proc. Ser. A {\bf 62} = Indag. Math. {\bf 21} (1959), 406--418.


\bibitem[Ch1]{Ch1}P.-H. Chaudouard, Sur la contribution unipotente dans la formule des traces d'Arthur pour les groupes g\'en\'eraux lin\'eaires, Israel J. Math. {\bf 218} (2017), 175--271.

\bibitem[Ch2]{Ch2}P.-H. Chaudouard, Sur certaines contributions unipotentes dans la formule des traces d'Arthur, arXiv:1510.02783, to appear in Amer. J. Math.

\bibitem[CL]{CL}P.-H. Chaudouard, G. Laumon, Sur le comptage des fibr\'es de Hitchin nilpotents, J. Inst. Math. Jussieu {\bf 15} (2016), 91--164.


\bibitem[CNP]{CNP}A. M. Cohen, G. Nebe, W. Plesken, Maximal integral forms of the algebraic group G2 defined by finite subgroups, J. Number Theory {\bf 72} (1998), 282--308.

\bibitem[CM]{CM}D. Collingwood, W. McGovern, Nilpotent orbits in semisimple Lie algebras, Van Nostrand Reinhold Mathematics Series, Van Nostrand Reinhold Co., New York, 1993.

\bibitem[DK]{DK}S. DeBacker, D. Kazhdan, Stable distributions supported on the nilpotent cone for the group $G_2$, The unity of mathematics, 205--262, Progr. Math., 244, Birkh\"auser Boston, Boston, MA, 2006.


\bibitem[DW]{DW}B. Datskovsky, D. Wright, The adelic zeta function associated with the space of binary cubic forms II: Local theory, J. Reine Angew. Math. {\bf 367}, 27--75.

\bibitem[FL]{FL}T. Finis, E. Lapid, On the continuity of the geometric side of the trace formula, Acta Math. Vietnam {\bf 41} (2016), 425--455.

\bibitem[GGS]{GGS}W.-T. Gan, B. Gross, G. Savin, Fourier coefficients of modular forms on $G_2$, Duke Math. J. {\bf 115} (2002), 105--169. 

\bibitem[GG]{GG}W.-T. Gan, N. Gurevich, Non-tempered Arthur packets of $G_2$, Automorphic representations, L-functions and applications: progress and prospects, 129--155, Ohio State Univ. Math. Res. Inst. Publ., 11, de Gruyter, Berlin, 2005.

\bibitem[Ho1]{Hoffmann1}W. Hoffmann, The nonsemisimple term in the trace formula for rank one lattices, J. Reine Angew. Math. {\bf 379} (1987), 1--21.

\bibitem[Ho2]{Hoffmann}W. Hoffmann, The trace formula and prehomogeneous vector spaces, M\"uller, Werner (ed.) et al., Families of automorphic forms and the trace formula. Proceedings of the Simons symposium, Puerto Rico, January 26 -- February 1, 2014. Simons Symposia, 175--215 (2016).

\bibitem[HW]{HW}W. Hoffmann, S. Wakatsuki, On the geometric side of the Arthur trace formula for the symplectic group of rank $2$, arXiv:1310.0541, to appear in Mem. Amer. Math. Soc.

\bibitem[KWY]{KWY}H. Kim, S. Wakatsuki, T. Yamauchi,  An equidistribution theorem for holomorphic Siegel modular forms for $\GSp_4$, arXiv:1604.02036, 2016.

\bibitem[Ko]{Kogiso}T. Kogiso, Simple calculation of the residues of the adelic zeta function associated with the space of binary cubic forms, J. Number Theory {\bf 51} (1995), 233--248.

\bibitem[Ma1]{Matz}J. Matz, Arthur's trace formula for $\GL(2)$ and $\GL(3)$ and non-compactly supported test functions, Dissertation, Universit\"at D\"usseldorf.

\bibitem[Ma2]{Matz2}J. Matz, Bounds for global coefficients in the fine geometric expansion of Arthur's trace formula for $\GL(n)$, Israel J. Math. {\bf 205} (2015), 337--396.

\bibitem[Ma3]{Matz3}J. Matz, Weyl's law for Hecke operators on $\GL(n)$ over imaginary quadratic number fields, to appear in Amer. J. Math.

\bibitem[MT]{MT}J. Matz, N. Templier, Sato-Tate equidistribution for families of Hecke-Maass forms on $\SL(n,\R)/\SO(n)$, arXiv:1505.07285, 2015.

\bibitem[Sa1]{Saito1}H. Saito, Explicit form of the zeta functions of prehomogeneous vector spaces, Math. Ann. {\bf 315} (1999), 587--615.

\bibitem[Sa2]{Saito2}H. Saito, Convergence of the zeta functions of prehomogeneous vector spaces, Nagoya Math. J. {\bf 170} (2003), 1--31.

\bibitem[Sh]{Shintani}T. Shintani, On Dirichlet series whose coefficients are class-numbers of integral binary cubic forms, J. Math. Soc. Japan {\bf 24} (1972), 132--188

\bibitem[SV]{SV}T. A. Springer, F. D. Veldkamp, Octonions, Jordan algebras and exceptional groups, Springer Monographs in Mathematics, Springer-Verlag, Berlin, 2000.

\bibitem[St]{Steinberg}R. Steinberg, Lectures on Chevalley groups, Notes prepared by John Faulkner and Robert Wilson, Yale University, New Haven, Conn., 1968.

\bibitem[Ta]{Taniguchi}T. Taniguchi, Distributions of discriminants of cubic algebras, math.NT/0606109, 2006.

\bibitem[Wr]{Wright}D. Wright, The adelic zeta function associated to the space of binary cubic forms part I: Global theory, Math. Ann. {\bf 270} (1985), 503--534.

\end{thebibliography}
\end{document}